\def\D{\displaystyle}
\def\pd#1#2{\frac{\partial #1}{\partial #2}}
\def\vv<#1>{\langle#1\rangle}
\def\XXint#1#2{\setbox0=\hbox{$#1{#2}{\int}$}{#2}\kern-.5\wd0 }
\def\XXint#1#2#3{{\setbox0=\hbox{$#1{#2#3}{\int}$}
     \vcenter{\hbox{$#2#3$}}\kern-.5\wd0}}
\def\vv<#1>{\langle#1\rangle}
\newtheorem{thm}{Theorem}[section]
\theoremstyle{definition}
\theoremstyle{remark}
\newtheorem{rem}{Remark}[section]
\numberwithin{equation}{section}
\begin{document}
\title{Time-dependent Sobolev inequality along the  Ricci flow}
\author{Chengjie Yu$^1$}
\address{Department of Mathematics, Shantou University, Shantou, 515063, Guangdong, P.R.China}
\email{cjyu@stu.edu.cn}
\thanks{$^1$ This work is partially supported by a startup funding for research at Shantou University.}
\date{Dec. 2008}

\begin{abstract}
In this article, we get a time-dependent Sobolev inequality along
the Ricci flow in a more general situation than those in Zhang
\cite{Zhang1}, Ye \cite{Ye} and Hsu \cite{Hsu} which also
generalizes the results of them. As an application of the
time-dependent Sobolev inequality, we get a growth of the ratio of
non-collapsing along immortal solutions of Ricci flow.
\end{abstract}

\maketitle\markboth{CHENGJIE YU}{SOBOLEV INEQUALITIES}
\section{Introduction}
Consider the Ricci flow
\begin{equation}
\left\{\begin{array}{l}\frac{d}{dt}g=-2Rc\\g(0)=g_0
\end{array}\right.
\end{equation}
on a closed manifold $M^n$. An important ingredient of Perelman's
proof of geometrization conjecture is the non-collapsing theorem of
Ricci flow which makes sure that we can get a singularity model of
the flow when a singularity exists. In \cite{Zhang1}, Zhang gave an
easier way to prove the non-collapsing theorem of Ricci flow via a
uniform Sobolev inequality along the flow. Unfortunately, there is
a mistake in the proof of Zhang \cite{Zhang1}. Later, Ye \cite{Ye}
corrected the error and Zhang \cite{Zhang2} also corrected the error
by himself.

In \cite{Ye} and \cite{Zhang1}, Ye and Zhang only considered Sobolev inequalities with
$L^2$ right hand side so that the surface case was excluded. In Hsu \cite{Hsu}, she got
uniform Sobolev inequalities with general right hand side so that the surface case was also
included.

In \cite{Ye} and \cite{Hsu}, Ye and Hsu got uniform Sobolev inequalities along the Ricci flow
with the assumption that we are only considering Ricci flow in a finite time interval
or that $\lambda_0(g_0)>0$ where $g_0$ is the initial metric of the Ricci flow and $\lambda_0(g)$
is the first eigenvalue of $-\Delta_{g}+\frac{R(g)}{4}$. Note that, we have the following
evolution inequality of $\lambda_0$ along the Ricci flow (See Kleiner-Lott \cite{KL}),
\begin{equation*}
\frac{d}{dt}\lambda_0\geq \frac{2}{n}\lambda_0^{2}.
\end{equation*}
Therefore, when $\lambda_0(g_0)>0$, the Ricci flow exists only for finite time. So, the second
case is included in the first case that assumes the the time interval is finite.

In this article, we remove the assumption of finite time interval and get a time-dependent Sobolev inequality along
the Ricci flow with Sobolev constants varying when time is varying which generalizs the results
of Zhang \cite{Zhang1}, Ye \cite{Ye} and Hsu \cite{Hsu}. The main result of this article is
as follows.
\begin{thm} Let $g(t)$ with $t\in [0,T)$ be a solution to the Ricci flow on a closed
manifold $M^n$ where $T$ can be $\infty$. Then, there are two positive constants $A$ and $B$ depending only on the initial
metric $g(0)$ and $n$, such that
\begin{equation*}
\begin{split}
&\Big(\int_M|u|^\frac{np}{n-p}dV_t\Big)^\frac{n-p}{np}\\
\leq&\frac{Ae^{Bt}}{n-p}\Bigg(\int_M\Big(\|\nabla u\|^2+\frac{R+\max_MR_{-}(0)+4}{4}u^2\Big)^\frac{p}{2}dV_t\Bigg)^\frac{1}{p}
\end{split}
\end{equation*}
for any $p\in [1,n)$, $t\in [0,T)$ and $u\in C^\infty(M)$.
\end{thm}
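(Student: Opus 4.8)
The plan is to combine Perelman's entropy monotonicity with the standard mechanism that upgrades a logarithmic Sobolev inequality into a Sobolev inequality, keeping track throughout of how each constant depends on $t$. The only genuinely new point compared with Ye \cite{Ye} and Hsu \cite{Hsu} is that I do not insist on a constant independent of $t$ --- which is exactly what forces $T<\infty$ --- but allow it to grow, and then check that the growth is at worst exponential. Concretely, recall Perelman's $\mathcal{W}$-entropy $\mathcal{W}(g,f,\tau)$ and $\mu(g,\tau)=\inf\{\mathcal{W}(g,f,\tau):\int_M(4\pi\tau)^{-n/2}e^{-f}\,dV=1\}$. Perelman's monotonicity (along the Ricci flow, with $\tau$ decreasing at unit speed, $\mathcal{W}$ is non-decreasing) gives $\mu(g(t),\tau)\ge\mu(g(0),\tau+t)$ for $0\le t<T$, $\tau>0$. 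Next I would establish a one-sided bound for the \emph{fixed} initial metric: since $(M,g(0))$ is closed it admits a logarithmic Sobolev inequality $\int_M w^2\log w^2\,dV_0\le\varepsilon\int_M\|\nabla w\|^2\,dV_0-\tfrac n2\log\varepsilon+C_0$ for all $\varepsilon\in(0,1]$ and all $w$ with $\int_M w^2\,dV_0=1$. Combining this (with $\varepsilon=\min\{4s,1\}$) with the crude estimate $\int_M R\,w^2\,dV_0\ge\min_M R(0)=-\rho$, where $\rho:=\max_MR_-(0)\ge0$, and unwinding the definition of $\mu$ by writing the constraint density as $w^2$, one obtains
\[
\mu(g(0),s)\ \ge\ -\rho\,s-C_1\log(s+2),\qquad s>0,
\]
for some $C_1=C_1(n,g(0))$, uniformly down to $s\to0$.

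Feeding this into the monotonicity bound and once more unwinding the definition of $\mu$ (put $\sigma=4\tau$ and, using $\int_M w^2\,dV_t=1$, trade the additive constant $\rho+4$ for an additive term in the potential) yields a time-dependent logarithmic Sobolev inequality: for all $\sigma\in(0,1]$, $t\in[0,T)$, and $w$ with $\int_M w^2\,dV_t=1$,
\[
\int_M w^2\log w^2\,dV_t\ \le\ \sigma\int_M\Big(\|\nabla w\|^2+\tfrac{R+\rho+4}{4}w^2\Big)dV_t-\tfrac n2\log\sigma+\theta(t),
\]
where the defect obeys $\theta(t)\le\rho\,t+C_1\log(t+3)+C_2$, i.e. grows at most linearly in $t$. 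The shift $R\mapsto R+\rho+4$ is precisely what forces the potential $\tfrac{R+\rho+4}{4}$ to be $\ge1$ (it uses $R\ge\min_MR(0)\ge-\rho$), so that $L_t:=-\Delta_t+\tfrac{R+\rho+4}{4}$ satisfies $\langle L_tw,w\rangle_t\ge\|w\|_{L^2(dV_t)}^2$; this is where the term $\max_MR_-(0)+4$ in the statement originates.

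Finally I would invoke the Davies-type lemma used by Ye \cite{Ye} (and, for a general $L^p$ right-hand side and the borderline dimension $n=2$, its refinement by Hsu \cite{Hsu}): a logarithmic Sobolev inequality as above, for an operator $L_t\ge1$ with defect $\theta(t)$, implies for each fixed $t$ a Sobolev inequality
\[
\Big(\int_M|u|^{\frac{np}{n-p}}dV_t\Big)^{\frac{n-p}{np}}\le\frac{C(n)}{n-p}\,e^{c(n)\theta(t)}\Big(\int_M\big(\|\nabla u\|^2+\tfrac{R+\rho+4}{4}u^2\big)^{\frac p2}dV_t\Big)^{\frac1p}
\]
for all $p\in[1,n)$; the passage from $p=2$ (a heat-kernel/ultracontractivity estimate for $e^{-\sigma L_t}$) to general $p$ is carried out by the substitution $v=|u|^{\,p(n-2)/2(n-p)}$ followed by H\"older's inequality, and this is what produces the $(n-p)^{-1}$ blow-up. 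Since $e^{c(n)\theta(t)}\le e^{c(n)\rho\,t}(t+3)^{c(n)C_1}e^{c(n)C_2}\le Ae^{Bt}$ with $A,B$ depending only on $n$ and $g(0)$ (the polynomial factor is absorbed into the exponential), the theorem follows; for $t$ in a bounded interval this reduces to the uniform inequalities of Ye and Hsu, so the real content is the regime $t\to\infty$.

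The main obstacle is the last step: the entropy-monotonicity input and the crude fixed-metric estimate are soft, but making the implication ``logarithmic Sobolev $\Rightarrow$ Sobolev'' quantitative --- so that the Sobolev constant is controlled through $e^{c(n)\theta(t)}$, hence, by the linear growth of $\theta$, like $Ae^{Bt}$ \emph{with no dependence on $T$} --- and running it uniformly over the full range $p\in[1,n)$ including $n=2$, is where the care is needed.
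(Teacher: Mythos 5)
Your overall strategy is exactly the paper's: (i) Perelman's $\mathcal W$-monotonicity applied to the fixed initial metric gives a logarithmic Sobolev inequality along the flow whose defect grows at most linearly in $t$ (this is Theorem \ref{thm-logarithmic-sobolev-inequality}, quoted from Ye and Hsu, with defect $A(t+\sigma/4)+B$); (ii) the potential is shifted to $\frac{R+\max_MR_-(0)+4}{4}\ge 1$ using the monotonicity of $\min_M R$ along the flow; (iii) the Davies machinery converts the log-Sobolev inequality at each fixed time into ultracontractivity of $e^{-sH_t}$ with constant $e^{c\,\theta(t)}$ and then into a Sobolev inequality, so that the linear growth of $\theta(t)$ becomes the $Ae^{Bt}$ factor. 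The one place where you diverge from the paper is the passage from ultracontractivity to the $L^p$ Sobolev inequality for the whole range $p\in[1,n)$: you propose deriving the $p=2$ Sobolev inequality and then using the substitution $v=|u|^{p(n-2)/2(n-p)}$ plus H\"older, but that device only works for $p\ge 2$ (for $p<2$ the H\"older exponent $2p/(p-2)$ is negative), and $p\in[1,2)$ is part of the claim --- indeed it is the entire range when $n=2$; you patch this by citing Hsu's refinement, which is legitimate as a plan but means the key quantitative step is outsourced. The paper instead proves this step directly (Theorems \ref{thm-ultracontractivity} and \ref{thm-ultra-sobolev}): it writes $H^{-1/2}f=\Gamma(1/2)^{-1}\int_0^\infty t^{-1/2}e^{-tH}f\,dt$, splits the integral at a level $T$ chosen in terms of $\lambda$, gets a weak-type $(p,\frac{np}{n-p})$ bound, and upgrades it to the strong inequality by the Saloff-Coste truncation argument (the sums over $u_{\rho,k}$); this handles all $p\in[1,n)$ uniformly, with the $(n-p)^{-1}$ coming from $\int_T^\infty t^{-1/2-n/(2p)}dt$ rather than from a H\"older step, and keeps the constant's dependence on the ultracontractivity bound (hence on $\theta(t)$) completely explicit, which is precisely what the time-dependent statement requires. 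So your proposal is sound in structure, but to make it self-contained on the crucial range $p\in[1,2)$ you would need either the paper's Riesz-potential-plus-truncation argument or a genuinely quantitative form of Hsu's lemma.
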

Our proof of the main result is mainly the same as in Zhang \cite{Zhang1}, Ye \cite{Ye} and Hsu \cite{Hsu}. Arguments
are mainly contained in Davies \cite{Davies}. First, by the monotunicity of Perelman's W-entropy, we can
get a uniform logarithmic Sobolev inequality. Then, the remaining arguments are somehow standard.
We get the result just by calculating the constants of estimations more carefully via a trick
in Saloff-Coste \cite{S}.
\section{Time-dependent Sobolev inequality and the growth of the ratio of non-collapsing }
Let $M^n$ be a closed manifold with dimension $n$ ($n\geq 2$). $g(t)$  with $t\in [0,T)$ be a solution
to the Ricci flow where $T$ may be $\infty$. By Ye \cite{Ye} and Hsu \cite{Hsu}, we have
the following uniform logarithmic Sobolev inequality.
\begin{thm}\label{thm-logarithmic-sobolev-inequality}
There are two positive constants $A,B$ depending only on the initial metric
$g(0)$ and $n$, such that for any $\sigma>0$ and $t\in [0,T)$
\begin{equation*}
\int_Mv^2\log v^2 dV_t\leq\sigma\int_M\Big(\|\nabla v\|^2+\frac{R}{4}v^2\Big)dV_t-\frac{n}{2}\log\sigma+A(t+\sigma/4)+B
\end{equation*}
for any $v\in C^\infty(M)$ with $\D\int_Mv^2dV_t=1$.
\end{thm}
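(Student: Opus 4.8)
The plan is to derive the inequality of Theorem~\ref{thm-logarithmic-sobolev-inequality} from the monotonicity of Perelman's $\mathcal{W}$-functional, exactly as in Ye \cite{Ye} and Hsu \cite{Hsu}, reducing the statement at time $t$ and scale $\sigma$ to a scale-uniform lower bound for Perelman's $\mu$-invariant of the fixed initial metric. Recall $\mathcal{W}(g,f,\tau)=\int_M[\tau(|\nabla f|^2+R)+f-n](4\pi\tau)^{-n/2}e^{-f}\,dV$ with the constraint $\int_M(4\pi\tau)^{-n/2}e^{-f}\,dV=1$, and $\mu(g,\tau)=\inf_f\mathcal{W}(g,f,\tau)$. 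Setting $v^2=(4\pi\tau)^{-n/2}e^{-f}$ (so that $\|v\|_{L^2(dV)}=1$, $|\nabla f|^2=4|\nabla v|^2/v^2$, $f=-\log v^2-\tfrac n2\log(4\pi\tau)$) and writing $\sigma=4\tau$, a direct computation gives
\begin{equation*}
\mathcal{W}(g,f,\tau)=\sigma\int_M\Big(|\nabla v|^2+\tfrac R4v^2\Big)dV-\int_Mv^2\log v^2\,dV-\tfrac n2\log\sigma-\tfrac n2\log\pi-n .
\end{equation*}
Hence ``$\mu(g,\sigma/4)\geq-\Lambda$'' is \emph{equivalent} to the logarithmic Sobolev inequality $\int_Mv^2\log v^2\,dV\leq\sigma\int_M(|\nabla v|^2+\tfrac R4v^2)\,dV-\tfrac n2\log\sigma+(\Lambda-\tfrac n2\log\pi-n)$, valid over smooth positive $v$ with $\|v\|_{L^2(dV)}=1$ (and then over all $v\in C^\infty$ by replacing $v$ with $|v|$ and approximating). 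So it suffices to show $\mu(g(t),\sigma/4)\geq-A(t+\sigma/4)-B'$ for constants $A,B'$ depending only on $g(0)$ and $n$.

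Two standard facts now enter. First, Perelman's monotonicity: along the Ricci flow $s\mapsto\mu(g(s),\tau(s))$ is nondecreasing whenever $\dot\tau=-1$; applied on $[0,t]$ with $\tau(s)=t+\sigma/4-s$, which stays $\geq\sigma/4>0$, this yields $\mu(g(t),\sigma/4)\geq\mu(g(0),t+\sigma/4)$. Thus it is enough to bound $\mu(g(0),\cdot)$ below on $(0,\infty)$ by an affine function of the scale. Second, the fixed closed manifold $(M,g(0))$ satisfies its own logarithmic Sobolev inequality $\int_Mv^2\log v^2\,dV_0\leq\epsilon\int_M|\nabla v|^2\,dV_0+\beta_0(\epsilon)$ for all $\epsilon>0$ and $\|v\|_{L^2(dV_0)}=1$, where $\beta_0(\epsilon)=-\tfrac n2\log\epsilon+c_1$ for $\epsilon\leq1$ and $\beta_0$ saturates ($\beta_0(\epsilon)\leq c_1$) for $\epsilon\geq1$; this is the usual consequence of the Sobolev inequality on $(M,g(0))$, with $c_1$ depending only on $g(0)$ and $n$. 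Inserting this with $\epsilon=4s$ into the displayed formula for $\mathcal{W}(g(0),f,s)$: when $s\leq1/4$ the two $\tfrac n2\log(4s)$ terms cancel and one is left with $\mathcal{W}(g(0),f,s)\geq s\int_MRv^2\,dV_0-\tfrac n2\log\pi-c_1-n$; when $s>1/4$ the constant $\beta_0$ gives $\mathcal{W}(g(0),f,s)\geq s\int_MRv^2\,dV_0-\tfrac n2\log(4\pi s)-c_1-n$, still affine in $s$ since $\log(4\pi s)\leq\log(4\pi)+s$. Using $\int_MRv^2\,dV_0\geq\min_MR(0)\geq-\max_MR_-(0)=:-c_0$, in either case $\mu(g(0),s)\geq-C's-C''$ with $C',C''$ depending only on $g(0)$ and $n$.

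Combining the two, $\mu(g(t),\sigma/4)\geq\mu(g(0),t+\sigma/4)\geq-C'(t+\sigma/4)-C''$; plugging this into the equivalence of the first paragraph and setting $A:=\max\{C',1\}$ and $B:=\max\{C''-\tfrac n2\log\pi-n,\,1\}$ (both positive and depending only on $g(0)$ and $n$) gives the inequality of Theorem~\ref{thm-logarithmic-sobolev-inequality}. The only step where care is genuinely needed is the scale-uniform lower bound for $\mu(g(0),\cdot)$: one must check that once the large-$\epsilon$ saturation of the initial manifold's logarithmic Sobolev constant is taken into account, the bound on $\mu(g(0),s)$ remains merely affine in $s$ --- it is exactly this linear growth in the scale that produces the affine right-hand side $A(t+\sigma/4)+B$, and hence the factor $e^{Bt}$ in the main theorem. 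All the rest is bookkeeping of the normalization constants $4$, $4\pi$ and $n$ in passing between $\mathcal{W}$, $\mu$ and the stated inequality.
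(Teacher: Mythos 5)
Your argument is correct and is essentially the argument the paper relies on: the paper does not reprove this theorem but cites Ye \cite{Ye} and Hsu \cite{Hsu}, whose proof is exactly your route --- rewrite Perelman's $\mathcal{W}$ in terms of $v$ with $\sigma=4\tau$, use monotonicity of $\mu$ with $\tau(s)=t+\sigma/4-s$ to reduce to a lower bound for $\mu(g(0),\cdot)$, and obtain that bound (affine in the scale, which is what produces the $A(t+\sigma/4)$ term) from the initial metric's own logarithmic Sobolev inequality. Your bookkeeping of the constants and of the large-$\epsilon$ saturation of $\beta_0$ checks out, so nothing further is needed.
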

By the same arguments as in Zhang \cite{Zhang1} and Ye \cite{Ye}
which is mainly contained in Davies \cite{Davies}, we have the
following ultracontractivity of heat kernel assuming a logarithmic
Sobolev inequality. Because our statement is a little different with
that in Ye \cite{Ye}, we also give the proof here.
\begin{thm}\label{thm-ultracontractivity}
Let $(M^n,g)$ be a closed Riemannian manifold,
\begin{equation*}
H=-\Delta+\Psi \ \mbox{and }Q(u)=\int_MuHudV=\int_M(\|\nabla u\|^2+\Psi u^2)dV
\end{equation*}
with $\Psi\in C^\infty(M)$ and $\Psi\geq0$. Let $\sigma_0$ be a positive constant such that for any
$\sigma\in (0,\sigma_0]$, the following logarithmic Sobolev
inequality
\begin{equation*}
\int_Mu^2\log u^2dV\leq\sigma Q(u)+\beta(\sigma)
\end{equation*}
holds for any smooth function $u$ with $\int_Mu^2 dV=1$ where
$\beta(\sigma)$ is an integrable function on $(0,\sigma_0]$. Then,
\begin{equation*}
\|e^{-tH}f\|_\infty\leq \exp\Big({\D\frac{1}{pt}\int_0^t\beta(4\sigma)d\sigma}\Big)\|f\|_p
\end{equation*}
 for any $p\geq 1$, $t\in (0,\sigma_0/4]$ and $f\in C^\infty(M)$.
\end{thm}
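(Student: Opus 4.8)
The plan is to run the classical Davies semigroup argument with a time-dependent $L^{p}$-exponent, keeping careful track of the constants. Since $\Psi\ge0$, the semigroup $e^{-sH}$ is positivity preserving and a contraction on every $L^{q}$, $q\in[1,\infty]$; combined with $|e^{-tH}f|\le e^{-tH}|f|$, this lets me assume $f$ is smooth and strictly positive, so that $u_{s}:=e^{-sH}f>0$ on $M$. Fix $p\ge1$ and $t\in(0,\sigma_{0}/4]$, and put $p(s)=\frac{pt}{t-s}$ for $s\in[0,t)$; then $p(0)=p$, $p$ is increasing, $p(s)\to\infty$ as $s\to t^{-}$, and $\frac{p'(s)}{p(s)^{2}}=\frac1{pt}$. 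Set $\Phi(s)=\|u_{s}\|_{p(s)}$, which tends to $\|u_{t}\|_{\infty}$ as $s\to t^{-}$.

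The core is a differential inequality for $\log\Phi$. With $N(s)=\int_{M}u_{s}^{p(s)}\,dV$, using $\partial_{s}u_{s}=\Delta u_{s}-\Psi u_{s}$ and the identity $\int_{M}u^{p-1}\Delta u\,dV=-\frac{4(p-1)}{p^{2}}\int_{M}|\nabla u^{p/2}|^{2}\,dV$, one computes
\[
N'(s)=p'(s)\int_{M}u_{s}^{p}\log u_{s}\,dV-\frac{4(p(s)-1)}{p(s)}\int_{M}|\nabla u_{s}^{p/2}|^{2}\,dV-p(s)\int_{M}\Psi u_{s}^{p}\,dV .
\]
Applying the assumed logarithmic Sobolev inequality to $v=u_{s}^{p/2}/\sqrt{N(s)}$ with a parameter $\sigma(s)\in(0,\sigma_{0}]$ and multiplying through by $N(s)$ bounds $p(s)\int_{M}u_{s}^{p}\log u_{s}\,dV$ from above by $N\log N+\sigma(s)\big(\int_{M}|\nabla u_{s}^{p/2}|^{2}\,dV+\int_{M}\Psi u_{s}^{p}\,dV\big)+N\,\beta(\sigma(s))$.

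I would then take $\sigma(s)$ with $p'(s)\sigma(s)=4(p(s)-1)$, i.e.\ $\sigma(s)=\frac{4pt\,(p(s)-1)}{p(s)^{2}}$; one checks $\sigma(s)\le 4t\le\sigma_{0}$, so the parameter is admissible. Substituting the log-Sobolev bound into the formula for $N'(s)$: the coefficient of $\int|\nabla u_{s}^{p/2}|^{2}$ is $\frac{p'\sigma-4(p-1)}{p}=0$; the coefficient of $\int\Psi u_{s}^{p}$ equals $\frac{4(p-1)}{p}-p=-\frac{(p-2)^{2}}{p}\le0$ and is dropped because $\Psi\ge0$; and the $N\log N$ term cancels the $-\frac{p'}{p^{2}}\log N$ produced by differentiating $\frac1{p(s)}\log N$. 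What is left is exactly $\frac{d}{ds}\log\Phi(s)\le\frac{p'(s)}{p(s)^{2}}\beta(\sigma(s))=\frac1{pt}\beta(\sigma(s))$. Integrating over $[0,t)$ and letting $s\to t^{-}$ gives $\log\|u_{t}\|_{\infty}-\log\|f\|_{p}\le\frac1{pt}\int_{0}^{t}\beta(\sigma(s))\,ds$, and a change of variables $\sigma=\sigma(s)$ (a reparametrization of the exponent path making $\sigma(s)$ equal to $4$ times a function that sweeps $[0,t]$, which is the trick of Saloff-Coste), together with $\sigma(s)\le4t\le\sigma_{0}$, turns the right-hand side into $\frac1{pt}\int_{0}^{t}\beta(4\sigma)\,d\sigma$.

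The main obstacle is precisely this last bookkeeping. One must reconcile three competing demands: (i) $p(s)$ has to blow up at the \emph{finite} time $s=t$; (ii) one needs $p'(s)\sigma(s)\le4(p(s)-1)$ for the gradient term to be absorbed; and (iii) the $\sigma$-substitution must produce \emph{exactly} the prefactor $\frac1{pt}$, not just some multiple of it. These pull against one another, most severely near $p=1$, where $4(p(s)-1)$ degenerates at $s=0$. Threading all three — through the careful choice of $\sigma(s)$ and of the path $p(s)$, as in Davies and Saloff-Coste — rather than settling for a lossy constant, is the delicate point; the rest is one integration by parts and a single use of the logarithmic Sobolev hypothesis.
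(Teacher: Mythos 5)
Your argument is correct and is essentially the paper's own proof: the same exponent path $p(s)=\frac{pt}{t-s}$, the same choice $\sigma(s)=4(p(s)-1)/p'(s)\le 4(t-s)\le\sigma_0$, the same cancellation of the gradient term, and the same dropping of the $\Psi$-term using $\Psi\ge 0$. The one loose point at the end --- passing from $\beta(\sigma(s))$ to $\beta(4(t-s))$ before integrating, which is really a pointwise comparison (requiring $\beta$ to be effectively nonincreasing) rather than a literal change of variables --- is precisely the step the paper itself takes without further comment.
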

\begin{proof}
%We assume that $\Psi\geq 0$, for general case, just replace $\Psi$ by $\Psi+\max\Psi_-$.
The logarithmic Sobolev inequality reads
\begin{equation*}
\int_Mu^p\log u^p\leq \sigma
Q(u^{\frac{p}{2}})+\beta(\sigma)\|u\|_p^{p}+\|u\|_p^{p}\log\|u\|_p^{p}
\end{equation*}
for any nonnegative function $u\in C^\infty(M)$, $p\geq 1$ and
$\sigma\in (0,\sigma_0]$.

Let $u(t)$ be a solution to the Schrodinger equation
\begin{equation*}
\pd{u}{t}=-Hu.
\end{equation*}
Let $p(t)$ be an increasing function to be determined($p(t)\geq 1$).
Compute as follows (assuming that $u$ is nonnegative),
\begin{equation*}
\begin{split}
&\frac{d\log\|u\|_{p(t)}}{dt}\\
=&\frac{d}{dt}\frac{1}{p(t)}\log\int_{M}u^p\\
=&-\frac{p'(t)}{p(t)^2}\log\Big(\int_{M}u^p\Big)+\frac{1}{p(t)\int_{M}u^p}\Big(\int_Mpu^{p-1}u'+\int_Mu^pp'\log u \Big)\\
=&-\frac{p'(t)}{p(t)^2}\log\int_M u^p-\frac{1}{\int_M
u^p}\int_Mu^{p-1}Hu+\frac{p'}{p^2\int_{M}u^p}\int_Mu^{p}{\log u^p}
\end{split}
\end{equation*}
Note that
\begin{equation*}
\begin{split}
\int_M u^{p-1}Hu=&\int_Mu^{p-1}(-\Delta u+\Psi u)\\
=&\int_M\vv<\nabla u^{p-1},\nabla u>+\Psi u^p\\
=&\frac{4(p-1)}{p^2}\int_M\|\nabla u^{p/2}\|^2+\int_M\Psi u^p\\
=&\frac{4(p-1)}{p^2}Q(u^{p/2})+\frac{(p-2)^2}{p^2}\int_M\Psi u^p\\
\geq&\frac{4(p-1)}{p^2}Q(u^{p/2})
\end{split}
\end{equation*}
Therefore,
\begin{equation*}
\begin{split}
&\frac{d\log\|u\|_{p(t)}}{dt}\\
\leq&-\frac{p'(t)}{p(t)^2}\log\int_M u^p-\frac{4(p-1)}{p^2\int_M u^p}\int_MQ(u^{p/2})+\frac{p'}{p^2\int_{M}u^p}\int_Mu^{p}{\log u^p}\\
=&\frac{p'(t)}{p(t)^2\int_Mu^p}\Big(\int_Mu^p\log
u^p-\frac{4(p-1)}{p'}Q(u^{p/2})-\int_Mu^p\log\int_Mu^p\Big).
\end{split}
\end{equation*}
In order to use the logarithmic Sobolev inequality, we need
\begin{equation*}
\frac{4(p(t)-1)}{p'(t)}\leq \sigma_0.
\end{equation*}
Assuming this, we get
\begin{equation*}
\frac{d\log\|u\|_{p(t)}}{dt}\leq
\frac{p'(t)}{p(t)^2}\beta(4(p-1)/p').
\end{equation*}
For each $s>0$, let $p(t)=\frac{ps}{s-t}$, then
\begin{equation*}
\frac{4(p(t)-1)}{p'}=\frac{4(ps/(s-t)-1)}{ps/(s-t)^2}=\frac{4(ps-(s-t))(s-t)}{ps}\leq
4(s-t).
\end{equation*}
So, when $s\in (0,\sigma_0/4]$, we have
\begin{equation*}
\frac{d\log\|u\|_{p(t)}}{dt}\leq \frac{1}{ps}\beta(4(s-t))
\end{equation*}
for any $t\in (0,s]$. Integrating against $t$ on $[0,s]$, we get
\begin{equation*}
\|u(s)\|_\infty\leq
e^{\frac{1}{ps}\int_0^s\beta(4\sigma)d\sigma}\|u(0)\|_p.
\end{equation*}

\end{proof}

From the ultracontractivity of heat kernel, we can get a Sobolev inequality with effective
constants by a combination of a trick in Davies \cite{Davies} and a trick in Saloff-Coste \cite{S}.

\begin{thm}\label{thm-ultra-sobolev}
Let $(M^n,g)$ be a closed Riemannian manifold,
\begin{equation*}
H=-\Delta+\Psi\ \mbox{and}\ \frak q(u)=\|\nabla u\|^2+\Psi u^2
\end{equation*}
with $\Psi\in C^\infty(M)$ and $\Psi\geq1$. Let $p\in [1,n)$, $A\geq 1$ be such that
\begin{equation*}
\|e^{-tH}f\|_\infty\leq At^{-\frac{n}{2p}}\|f\|_p
\end{equation*}
for any $t\in (0,1]$ and $f\in C^\infty(M)$. Then,
\begin{equation*}
\|u\|_{\frac{np}{n-p}}\leq \frac{32An^np}{n-p}\Big(\int_M\frak q(u)^\frac{p}{2}dV\Big)^\frac{1}{p}.
\end{equation*}
for any $u\in C^\infty(M)$.
\end{thm}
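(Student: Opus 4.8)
The plan is to run the now-standard passage from heat-kernel ultracontractivity to a Sobolev inequality (Davies, Varopoulos), but keeping every constant explicit, and then to bring the right-hand side to a pointwise $\frak q$-expression and to the exponent $p$ by the substitution device of Saloff--Coste; write $q=\frac{np}{n-p}$. Three reductions come first. Since $|\nabla|u||\le\|\nabla u\|$ a.e. we have $\frak q(|u|)\le\frak q(u)$, so assume $u\ge0$. Since $\Psi\ge1$ the operator $H$ satisfies $H\ge1$, hence $e^{-tH}$ is a contraction on every $L^r(M)$, in fact $\|e^{-tH}f\|_r\le e^{-t}\|f\|_r$ for $1\le r\le\infty$ (prove it for $r=\infty$ by a maximum-principle/barrier argument, for $r=1$ by duality, for the rest by interpolation); combining this with the hypothesis and the semigroup law extends the ultracontractive bound to all times, $\|e^{-tH}f\|_\infty\le eA\,e^{-t}\max(t^{-n/(2p)},1)\|f\|_p$, and interpolating with the $L^p\to L^p$ contraction (interpolation exponent $\theta=p/n$) gives
\begin{equation*}
\|e^{-tH}f\|_q\le eA\,e^{-t}\max(t^{-1/2},1)\,\|f\|_p\qquad(t>0),
\end{equation*}
which is the quantitative input for the rest. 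Third, $\Psi\ge1$ also gives $|u|\le\frak q(u)^{1/2}$ pointwise, so $\|u\|_r\le\big(\int_M\frak q(u)^{r/2}\,dV\big)^{1/r}$ for every $r$.

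Next I would pass from ultracontractivity to a base Sobolev inequality. The device that avoids convergence trouble is to first extract a Nash-type inequality: from $\frac{d}{dt}\|e^{-tH}u\|_2^2=-2\|H^{1/2}e^{-tH}u\|_2^2\ge-2Q(u)$ one gets $\|u\|_2^2\le\|e^{-tH}u\|_2^2+2t\,Q(u)$, and bounding the first term by the ultracontractive estimate and optimizing in $t$ yields a Nash inequality with explicit constant in $A,n$; a Moser iteration then upgrades this to a Sobolev inequality. The key structural point is that the quadratic form of Theorem~\ref{thm-logarithmic-sobolev-inequality} satisfies $Q(v)=\int_M\frak q(v)\,dV$, and under the substitution $v=|u|^\beta$ ($\beta\ge1$) one has the \emph{pointwise} inequality $\frak q(|u|^\beta)\le\beta^2|u|^{2\beta-2}\frak q(u)$ — with no Riesz transform — so that $Q(|u|^\beta)\le\beta^2\int_M|u|^{2\beta-2}\frak q(u)\,dV$, which by H\"older is $\le\beta^2\||u|^{2\beta-2}\|_{p/(p-2)}\,\|\frak q(u)^{1/2}\|_p^2$; feeding this into the iteration (the Saloff--Coste substitution) produces $\|u\|_q\le C\|\frak q(u)^{1/2}\|_p$, and matching the exponents step by step forces a factor comparable to $\frac{np}{n-p}$ (the origin of $\frac1{n-p}$) while the accumulated iteration constants produce the power $n^n$. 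An alternative to the iteration is to represent $u$ via $H^{-1/2}=\pi^{-1/2}\int_0^\infty s^{-1/2}e^{-sH}\,ds$ applied to $H^{1/2}u$, or via Duhamel $u=e^{-t_0H}u+\int_0^{t_0}He^{-sH}u\,ds$, in the spirit of Davies.

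The main obstacle is a borderline convergence: the crude form of these semigroup integrals diverges logarithmically as $s\to0$ (one meets $\int_0 s^{-1}\,ds$ once both the $s^{-1/2}$ from $H^{-1/2}$ and the $s^{-1/2}$ gain in $\|e^{-sH}\|_{p\to q}$ are used), so the cutoff $t_0$ — equivalently the splitting point in the subordination integral, or passing through $H^{-\alpha}$ with $\alpha$ slightly below $\tfrac12$ and interpolating, or going through Nash as above — must be inserted and then optimized; this optimization is precisely the trick of Saloff--Coste, and it is what simultaneously rescues convergence and pins the numerical constant. Once that is arranged, what remains is bookkeeping: tracking the accumulated powers of $n$ through the interpolation exponents, the Gamma-type integrals, and the substitution/iteration step, and using $A\ge1$ to absorb lower-order terms, so as to conclude that the constant is at most $\frac{32An^np}{n-p}$. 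This last part is elementary but long, and is where most of the real work sits.
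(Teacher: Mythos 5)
Your opening reductions (passing to $u\ge 0$, extending the ultracontractive bound to all $t>0$ using $\Psi\ge 1$, and the interpolated $L^p\to L^{np/(n-p)}$ bound) match the start of the paper's proof, but the core of your argument has a genuine gap. Your primary route --- a Nash inequality followed by Moser iteration with the substitution $v=|u|^\beta$ --- breaks down exactly in the range the theorem is designed to cover: the H\"older step $\int_M|u|^{2\beta-2}\frak q(u)\,dV\le\big\||u|^{2\beta-2}\big\|_{p/(p-2)}\|\frak q(u)^{1/2}\|_p^2$ only makes sense for $p>2$, so nothing is proved for $p\in[1,2)$, and in particular the case $n=2$ (the main reason for allowing general $p$ rather than $p=2$) is lost. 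Even for $p>2$, the passage from the $L^2$-based Nash inequality to an $L^p$-gradient Sobolev inequality with the explicit constant $\frac{32An^np}{n-p}$ is only asserted (``matching the exponents'', ``bookkeeping''), not carried out; that is precisely where the work lies, and it is not routine.

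The alternative you gesture at --- writing $u=H^{-1/2}f$ by subordination, splitting the integral at $T$ and choosing $T$ in terms of $\lambda$ --- is the paper's actual route (this is Davies' trick), but your sketch stops where the paper's proof is only halfway done. The cutoff optimization you call ``precisely the trick of Saloff--Coste'' is Davies' argument, and it yields only a weak-type $(p,\frac{np}{n-p})$ estimate of the form $\lambda^{\frac{np}{n-p}}m(\{|u|\ge\lambda\})\le C\big(\int_M\frak q(u)^{\frac p2}dV\big)^{\frac{n}{n-p}}$. The step the paper actually takes from Saloff--Coste, and which your proposal omits entirely, is the level-set truncation argument: apply the weak-type bound to $u_{\rho,k}=(u-\rho^k)_+\wedge(\rho^{k+1}-\rho^k)$, sum over $k$, and use that $\sum_k\int_M\frak q(u_{\rho,k})^{\frac p2}dV\lesssim\int_M\frak q(u)^{\frac p2}dV$ while the left-hand sides add up to a multiple of $\int_M u^{\frac{np}{n-p}}dV$; with $\rho=2$ this produces the strong inequality and the stated constant. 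Without this truncation step (or a substitute such as Marcinkiewicz interpolation, which degrades the constant near the endpoints and moreover forces you to control $\|H^{1/2}u\|_p$ by $\|\frak q(u)^{1/2}\|_p$ for $p\ne2$), the weak-type estimate does not yield the strong Sobolev inequality, so the proposal as written does not reach the conclusion.
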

\begin{proof} When $t>1$, by maximum principle, we have
\begin{equation*}
\begin{split}
\|e^{-tH}f\|_\infty&\leq e^{-(t-1)}\|e^{-H}f\|_\infty\leq
Ae^{-(t-1)}\|f\|_{p}\leq n^nAt^{-\frac{n}{2p}}\|f\|_{p}
\end{split}
\end{equation*}
for any $f\in C^\infty(M)$. Obviously, the inequality is also true for $t\in (0,1]$.

Note that,
\begin{equation*}
\begin{split}
H^{-1/2}f=&\Gamma\Big(\frac{1}{2}\Big)^{-1}\int_0^{\infty}t^{-1/2}e^{-tH}fdt\\
=&\Gamma(1/2)^{-1}\int_0^{T}t^{-1/2}e^{-tH}fdt+\Gamma(1/2)^{-1}\int_T^{\infty}t^{-1/2}e^{-tH}fdt\\
:=&g+h
\end{split}
\end{equation*}
where $T$ is a positive constant to be determined. Moreover, note that
\begin{equation*}
\|h\|_\infty\leq n^nA\Gamma(1/2)^{-1}\int_T^{\infty} t^{-1/2-n/(2p)}dt\|f\|_p=\frac{2n^nApT^{-\frac{n-p}{2p}}}{(n-p)\Gamma(1/2)}\|f\|_p.
\end{equation*}
For any $\lambda>0$, choose $T>0$ such that
\begin{equation}\label{eqn-T-lambda}
\frac{2n^nApT^{-\frac{n-p}{2p}}}{(n-p)\Gamma(1/2)}\|f\|_p=\lambda/2.
\end{equation}
Then,
\begin{equation*}
\begin{split}
m(\{|H^{-1/2}f|\geq\lambda\})\leq&m(\{|g|\geq\lambda/2\})\leq (\lambda/2)^{-p}\|g\|_p^{p}\\
\leq&(\lambda/2)^{-p}\Gamma(1/2)^{-p}\Big(\int_{0}^{T}\|t^{-1/2}e^{-tH}f\|_pdt\Big)^p\\
\leq& \Gamma(1/2)^{-p}4^p\lambda^{-p}T^\frac{p}{2}\|f\|_p^{p},
\end{split}
\end{equation*}
where we have used the Minkowski inequality and the fact $$\|e^{-tH}f\|_p\leq \|f\|_p.$$

Substituting (\ref{eqn-T-lambda}) into the last equation, we get
\begin{equation*}
\lambda^{\frac{np}{n-p}}m(\{|H^{-1/2}f|\geq\lambda\})\leq 4^\frac{np}{n-p}\Big(\frac{n^nAp}{n-p}\Big)^\frac{p^2}{n-p}\|f\|_p^{\frac{np}{n-p}}.
\end{equation*}
In another word, we get
\begin{equation*}
\lambda^{\frac{np}{n-p}}m(\{|u|\geq\lambda\})\leq 4^\frac{np}{n-p}\Big(\frac{n^nAp}{n-p}\Big)^\frac{p^2}{n-p}\Big(\int_M\frak q(u)^\frac{p}{2}dV\Big)^\frac{n}{n-p}
\end{equation*}
for any $\lambda>0$ and $u\in C^\infty(M)$.

Let $u$ be a nonnegative function. Let $\rho>1$ be a constant to be determined and Let
\begin{equation*}
u_{\rho,k}=(u-\rho^k)_+\wedge(\rho^{k+1}-\rho^k)=\left\{\begin{array}{ll}0&\mbox{if
$u\leq \rho^k$}\\u-\rho^k&\mbox{if $\rho^k<u\leq\rho^{k+1}$}\\
\rho^{k+1}-\rho^k&\mbox{if $u>\rho^{k+1}$}\end{array}\right.,
\end{equation*}
where $k$ is any integer. Then,
\begin{equation*}
\begin{split}
&(\rho^{k+1}-\rho^k)^\frac{np}{n-p}m(\{u\geq\rho^{k+1}\})\\
=&(\rho^{k+1}-\rho^k)^\frac{np}{n-p}m(\{u_{\rho,k}\geq\rho^{k+1}-\rho^k\})\\
\leq&4^\frac{np}{n-p}\Big(\frac{n^nAp}{n-p}\Big)^\frac{p^2}{n-p}\Big(\int_M\frak q(u_{\rho,k})^\frac{p}{2}dV\Big)^\frac{n}{n-p}\\
\end{split}
\end{equation*}
So,
\begin{equation*}
\begin{split}
&\sum_{k=-\infty}^\infty(\rho^{k+1}-\rho^k)^\frac{np}{n-p}m(\{u\geq\rho^{k+1}\})\\
\leq&4^\frac{np}{n-p}\Big(\frac{n^nAp}{n-p}\Big)^\frac{p^2}{n-p}\sum_{k=-\infty}^{\infty}\Big(\int_M\frak q(u_{\rho,k})^\frac{p}{2}dV\Big)^\frac{n}{n-p}\\
\leq&4^\frac{np}{n-p}\Big(\frac{n^nAp}{n-p}\Big)^\frac{p^2}{n-p}\Big(\sum_{k=-\infty}^{\infty}\int_{M}\frak q(u_{\rho,k})^\frac{p}{2}dV\Big)^\frac{n}{n-p}\\
\leq&4^\frac{np}{n-p}\Big(\frac{n^nAp}{n-p}\Big)^\frac{p^2}{n-p}\Big(\sqrt{2}^p\sum_{k=-\infty}^{\infty}\int_{M}(\|\nabla u_{\rho,k}\|^p+\Psi^{\frac{p}{2}}u_{\rho,k}^p)dV\Big)^\frac{n}{n-p}\\
\leq&4^\frac{np}{n-p}\Big(\frac{n^nAp}{n-p}\Big)^\frac{p^2}{n-p}\Big(\sqrt{2}^p\int_M(\|\nabla u\|^p+\Psi^\frac{p}{2}u^{p})dV\Big)^\frac{n}{n-p}\\
\leq&4^\frac{np}{n-p}\Big(\frac{n^nAp}{n-p}\Big)^\frac{p^2}{n-p}\Big(\sqrt{2}^p\cdot\sqrt 2\int_M\frak q(u)^\frac{p}{2}dV\Big)^\frac{n}{n-p}\\
\leq&8^\frac{np}{n-p}\Big(\frac{n^nAp}{n-p}\Big)^\frac{p^2}{n-p}\Big(\int_M\frak q(u)^\frac{p}{2}dV\Big)^\frac{n}{n-p}\\
\end{split}
\end{equation*}
where we have used the inequalities
\begin{equation*}
\frac{1}{\sqrt
2}(a^\frac{p}{2}+b^\frac{p}{2})\leq(a+b)^{\frac{p}{2}}\leq
(\sqrt2)^{p}(a^\frac{p}{2}+b^\frac{p}{2})
\end{equation*}
for any $a,b\geq 0$,
\begin{equation*}
\sum_{k=-\infty}^{\infty}\int_{M}\|\nabla u_{\rho,k}\|^pdV=\int_M\|\nabla u\|^pdV,
\end{equation*}
and
\begin{equation*}
\begin{split}
&\sum_{k=-\infty}^{\infty}\int_{M}\Psi^{\frac{p}{2}}u_{\rho,k}^pdV\\
=&\sum_{k=-\infty}^{\infty}\int_{\{\rho^k<u\leq\rho^{k+1}\}}\Psi^{\frac{p}{2}}(u-\rho^k)^pdV+\sum_{k=-\infty}^{\infty}\int_{\{u>\rho^{k+1}\}}\Psi^{\frac{p}{2}}(\rho^{k+1}-\rho^k)^pdV\\
\leq&\sum_{k=-\infty}^{\infty}\int_{\{\rho^k<u\leq\rho^{k+1}\}}\Psi^{\frac{p}{2}}(u-\rho^k)^pdV+\sum_{k=-\infty}^{\infty}\int_{\{u>\rho^{k+1}\}}\Psi^{\frac{p}{2}}(\rho^{(k+1)p}-\rho^{kp})dV\\
=&\sum_{k=-\infty}^{\infty}\int_{\{\rho^k<u\leq\rho^{k+1}\}}\Psi^{\frac{p}{2}}(u-\rho^k)^pdV+\sum_{k=-\infty}^{\infty}\int_{\{u>\rho^{k}\}}\Psi^{\frac{p}{2}}\rho^{kp}dV-\sum_{k=-\infty}^{\infty}\int_{\{u>\rho^{k+1}\}}\Psi^{\frac{p}{2}}\rho^{kp}dV\\
=&\sum_{k=-\infty}^{\infty}\int_{\{\rho^k<u\leq\rho^{k+1}\}}\Psi^{\frac{p}{2}}\big((u-\rho^k)^p+\rho^{kp}\big)dV\\
\leq&\sum_{k=-\infty}^{\infty}\int_{\{\rho^k<u\leq\rho^{k+1}\}}\Psi^{\frac{p}{2}}u^pdV=\int_M\Psi^\frac{p}{2}u^pdV.\\
\end{split}
\end{equation*}
On the other hand,
\begin{equation*}
\begin{split}
&\sum_{k=-\infty}^\infty(\rho^{k+1}-\rho^k)^\frac{np}{n-p}m(\{u\geq\rho^{k+1}\})\\
=&\sum_{k=-\infty}^\infty(\rho-1)^\frac{np}{n-p}\rho^\frac{knp}{n-p}m(\{u\geq\rho^{k+1}\})\\
\geq&(\rho-1)^\frac{np}{n-p}\sum_{k=-\infty}^\infty\rho^\frac{knp}{n-p}\rho^{-\frac{(np+p-n)(k+2)}{n-p}}(\rho^{k+2}-\rho^{k+1})^{-1}\int_{\rho^{k+1}}^{\rho^{k+2}}t^{\frac{np+p-n}{n-p}}m(\{u\geq t\})dt\\
=&(\rho-1)^\frac{np+p-n}{n-p}\rho^{-\frac{2np+p-n}{n-p}}\int_0^{\infty}t^{^\frac{np+p-n}{n-p}}m(\{u\geq t\})dt\\
=&\frac{n-p}{np}(\rho-1)^\frac{np+p-n}{n-p}\rho^{-\frac{2np+p-n}{n-p}}\int_0^{\infty}t^{^\frac{np+p-n}{n-p}}m(\{u\geq t\})dt\\
=&\frac{n-p}{np}(\rho-1)^\frac{np+p-n}{n-p}\rho^{-\frac{2np+p-n}{n-p}}\int_Mu^{\frac{np}{n-p}}dV
\end{split}
\end{equation*}
Letting $\rho=2$ and combining the last two inequalities, we get
\begin{equation*}
\begin{split}
\int_M u^\frac{np}{n-p}dV\leq&\frac{np}{n-p}\times2^{\frac{2np+p-n}{n-p}}\times 8^\frac{np}{n-p}\Big(\frac{n^nAp}{n-p}\Big)^\frac{p^2}{n-p}\Big(\int_{M}\frak q(u)^\frac{p}{2}dV\Big)^\frac{n}{n-p}\\
\leq&n^{\frac{n^2p}{n-p}}\Big(\frac{32Ap}{n-p}\Big)^\frac{np}{n-p}\Big(\int_{M}\frak q(u)^\frac{p}{2}dV\Big)^\frac{n}{n-p}\\
\end{split}
\end{equation*}
Hence,
\begin{equation*}
\|u\|_{\frac{np}{n-p}}\leq
\frac{32An^np}{n-p}\Big(\int_M\frak{q}(u)^\frac{p}{2}dV\Big)^\frac{1}{p}.
\end{equation*}
\end{proof}

Combining the last three theorems, we get the following time-dependent Sobolev inequality along the Ricci flow.
\begin{thm} Let $g(t)$ with $t\in [0,T)$ be a solution to the Ricci flow on a closed
manifold $M^n$ where $T$ can be $\infty$. Then, there are two positive constants $A$ and $B$ depending only on the initial
metric $g(0)$ and $n$, such that
\begin{equation*}
\begin{split}
&\Big(\int_M|u|^\frac{np}{n-p}dV_t\Big)^\frac{n-p}{np}\\
\leq&\frac{Ae^{Bt}}{n-p}\Bigg(\int_M\Big(\|\nabla u\|^2+\frac{R+\max_MR_-(0)+4}{4}u^2\Big)^\frac{p}{2}dV_t\Bigg)^\frac{1}{p}
\end{split}
\end{equation*}
for any $p\in [1,n)$, $t\in [0,T)$ and $u\in C^\infty(M)$.
\end{thm}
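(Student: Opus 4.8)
The plan is to fix the Ricci flow time $t\in[0,T)$ and obtain the assertion by chaining the three preceding theorems, paying attention only to the bookkeeping of constants. All norms, gradients and integrals below are taken with respect to $g(t)$. Put $r_0=\max_MR_-(0)$ and
\[
\Psi=\frac{R(g(t))+r_0+4}{4},\qquad Q(v)=\int_M\big(\|\nabla v\|^2+\Psi v^2\big)\,dV_t.
\]
First I would check that $\Psi\ge1$: the scalar curvature lower bound is preserved under the Ricci flow on a closed manifold (apply the maximum principle to $\partial_t R=\Delta R+2|\mathrm{Rc}|^2\ge\Delta R$), so $R(g(t))\ge\min_M R(g(0))\ge-r_0$, whence $R(g(t))+r_0+4\ge4$. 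Writing $c_0=(r_0+4)/4$ we have $R/4=\Psi-c_0$, so for every $v\in C^\infty(M)$ with $\int_Mv^2\,dV_t=1$ the inequality of Theorem~\ref{thm-logarithmic-sobolev-inequality} becomes
\[
\int_Mv^2\log v^2\,dV_t\le\sigma Q(v)+\beta_t(\sigma),\qquad \beta_t(\sigma)=-\tfrac n2\log\sigma+\big(\tfrac A4-c_0\big)\sigma+At+B,
\]
for all $\sigma>0$, where $A,B$ are the constants of Theorem~\ref{thm-logarithmic-sobolev-inequality}; note that $\beta_t$ is integrable on $(0,4]$.

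Next I would apply Theorem~\ref{thm-ultracontractivity} with $H=-\Delta_{g(t)}+\Psi$ (admissible since $\Psi\ge0$), $\beta=\beta_t$, and $\sigma_0=4$, which gives, for every $p\ge1$, $s\in(0,1]$ and $f\in C^\infty(M)$,
\[
\|e^{-sH}f\|_\infty\le\exp\Big(\tfrac1{ps}\int_0^s\beta_t(4\sigma)\,d\sigma\Big)\|f\|_p.
\]
A direct integration gives $\int_0^s\beta_t(4\sigma)\,d\sigma=-\tfrac n2 s\log(4s)+\tfrac n2 s+(A-4c_0)\tfrac{s^2}{2}+(At+B)s$, so using $p\ge1$, $0<s\le1$ and $\log4>0$ one gets
\[
\tfrac1{ps}\int_0^s\beta_t(4\sigma)\,d\sigma\le-\tfrac n{2p}\log s+C_0+At,\qquad C_0:=\tfrac n2+\tfrac{|A-4c_0|}{2}+B,
\]
with $C_0$ depending only on $g(0)$ and $n$ and $C_0+At>0$. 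Hence $\|e^{-sH}f\|_\infty\le A_t\,s^{-n/(2p)}\|f\|_p$ for all $s\in(0,1]$, where $A_t:=e^{C_0+At}\ge1$.

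Finally I would feed this into Theorem~\ref{thm-ultra-sobolev} with the same $\Psi$ (it satisfies $\Psi\ge1$, as the hypothesis requires) and with $A=A_t\ (\ge1)$, observing that here $\mathfrak q(u)=\|\nabla u\|^2+\Psi u^2=\|\nabla u\|^2+\tfrac{R+\max_M R_-(0)+4}{4}u^2$. This yields
\[
\Big(\int_M|u|^{\frac{np}{n-p}}\,dV_t\Big)^{\frac{n-p}{np}}=\|u\|_{\frac{np}{n-p}}\le\frac{32\,A_t\,n^n p}{n-p}\Big(\int_M\mathfrak q(u)^{\frac p2}\,dV_t\Big)^{\frac1p}.
\]
Since $1\le p<n$, we may bound $32\,A_t\,n^n p\le 32\,e^{C_0}\,n^{n+1}\,e^{At}$, so the theorem follows with (new) constants $\mathcal A=32\,e^{C_0}\,n^{n+1}$ and $\mathcal B=A$, both depending only on $g(0)$ and $n$.

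I do not expect a serious obstacle here: the argument is a routine composition of the three theorems. The points needing care are (i) verifying $\Psi\ge1$ --- which is exactly why the term $+4$ and the term $+\max_M R_-(0)$ appear in the statement, since Theorem~\ref{thm-ultra-sobolev} requires $\Psi\ge1$ while Theorem~\ref{thm-ultracontractivity} only needs $\Psi\ge0$; (ii) keeping the heat-semigroup time $s$ decoupled from the Ricci flow time $t$ throughout; and (iii) checking that the exponent $p$ never causes trouble, because it ranges over the bounded interval $[1,n)$ and $e^{x/p}\le e^x$ for $x\ge0$, so every constant that survives depends only on $g(0)$ and $n$.
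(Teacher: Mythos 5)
Your proposal is correct and follows essentially the same route as the paper: it chains the uniform logarithmic Sobolev inequality (Theorem \ref{thm-logarithmic-sobolev-inequality}) through the ultracontractivity bound (Theorem \ref{thm-ultracontractivity}) into the heat-kernel-to-Sobolev theorem (Theorem \ref{thm-ultra-sobolev}), with the potential shifted so that $\Psi\geq 1$. The only cosmetic difference is that you put $\max_M R_-(0)$ into the potential from the start (justified by the preserved scalar curvature lower bound), whereas the paper works with $\max_M R_-(t)$ and invokes its monotonicity at the end; the constant bookkeeping is otherwise the same.
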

\begin{proof}
By Theorem \ref{thm-logarithmic-sobolev-inequality}, we have two positive constants $C_1$ and $C_2$
depending only on the initial metric $g(0)$, such that for any $\sigma\in (0,4]$ and $t\in [0,T)$
\begin{equation*}
\int_Mv^2\log v^2 dV_t\leq\sigma\int_M\Big(\|\nabla v\|^2+\frac{R}{4}v^2\Big)dV_t-\frac{n}{2}\log\sigma+C_1t+C_2
\end{equation*}
for any $v\in C^\infty(M)$ with $\D\int_Mv^2dV_t=1$.

For each $s\in [0,T)$, let
\begin{equation*}
H_s=-\Delta_s+\frac{R(s)+\max_M R_-(s)}{4}+1,
\end{equation*}
\begin{equation*}
\frak{q}_s(u)=\vv<\nabla^su,\nabla^su>_s+\Big(\frac{R(s)+\max_M R_-(s)}{4}+1\Big)u^2
\end{equation*}
and
\begin{equation*}
Q(u)=\int_M\frak{q}_s(u)dV_s.
\end{equation*}
Then,
\begin{equation*}
\int_Mu^2\log u^2 dV_s\leq\sigma Q_s(u)-\frac{n}{2}\log\sigma+C_1s+C_2
\end{equation*}
for any $\sigma\in (0,4]$ and for any $u\in C^\infty(M)$ with $\int_Mu^2dV_s=1$.

By Theorem \ref{thm-ultracontractivity}, we have
\begin{equation*}
\|e^{-tH_s}f\|_\infty\leq e^{C_1s+C_3}t^{-\frac{n}{2p}}\|f\|_p
\end{equation*}
for any $f\in C^\infty(M)$, $t\in (0,1]$ and $p\geq 1$, where $C_3=C_2+\frac{n}{2}$.

Finally, by Theorem \ref{thm-ultra-sobolev}, we have
\begin{equation*}
\|u\|_{\frac{np}{n-p}}\leq\frac{32n^np\exp(C_1s+C_3)}{n-p}\|\sqrt{\frak{q}_s(u)}\|_p.
\end{equation*}
on $(M^n,g(s))$, for any $u\in C^\infty(M)$ and $p\in [1,n)$.

Noting that $\max R_-(t)$ decreases along the Ricci flow, we get the following time-dependent Sobolev inequality
along the Ricci flow:
\begin{equation*}
\begin{split}
&\Big(\int_M|u|^\frac{np}{n-p}dV_t\Big)^\frac{n-p}{np}\\
\leq&\frac{16n^np\exp(C_1t+C_3)}{n-p}\Bigg(\int_M\Big(\|\nabla u\|^2+\frac{R+\max_MR_-(0)+4}{4}u^2\Big)^\frac{p}{2}dV_t\Bigg)^\frac{1}{p}
\end{split}
\end{equation*}
for any $t\in [0,T)$ and $u\in C^\infty(M)$.
\end{proof}
\begin{rem}
When $T$ is finite, we get the uniform Sobolev inequalities in Zhang \cite{Zhang1}, Ye \cite{Ye} and
Hsu \cite{Hsu}.
\end{rem}

By a standard arguments via iteration as in Zhang \cite{Zhang1}, Ye \cite{Ye} and
Hsu \cite{Hsu}, we get the following growth of the ratio non-collapsing along Ricci flow.

\begin{thm}
Let $g(t)$ with $t\in [0,T)$ be a solution to the Ricci flow on a closed
manifold $M^n$ where $T$ can be $\infty$. Then, there are two positive constants $\kappa$ and $A$ depending only on the initial
metric $g(0)$ and $n$, such that
\begin{equation*}
V_x(r,t)\geq \kappa e^{-At}r^n
\end{equation*}
whenever $r\leq 1$ and $r^2R(t)\leq 1$ on $B_x(r,t)$, where
$B_x(r,t)$ is the geodesic ball of radius $r$ centered $x$ with
respect to $g(t)$ and $V_x(r,t)$ is the volume of $B_x(r,t)$ with
respect to $g(t)$.
\end{thm}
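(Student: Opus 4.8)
The plan is to feed the $p=1$ case of the time-dependent Sobolev inequality (the last theorem above) with cut-off functions supported on the geodesic balls $B_x(\rho,t)$, $\rho\le r$, extract a volume recursion over dyadic scales, and then iterate \emph{downward} to $\rho\to 0$, where the Euclidean volume asymptotics of a smooth metric pin down the limit and hand back a lower bound at the scale $r$. Fix $t\in[0,T)$, $x\in M$ and $r\le 1$ with $r^2R(t)\le 1$ on $B_x(r,t)$; write $dV=dV_t$, $d=d_{g(t)}$, and set $\Psi=\frac{R+\max_M R_-(0)+4}{4}$. Since $\max_M R_-$ is non-increasing along the flow we have $R\ge-\max_M R_-(0)$, so $\Psi\ge 1$; and since $R\le r^{-2}$ on $B_x(r,t)$ and $r\le 1$, we get $1\le\Psi\le c_0 r^{-2}$ on $B_x(r,t)$ with $c_0=\frac{\max_M R_-(0)+5}{4}$ depending only on $g(0)$.

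For $0<\rho\le r$ let $\phi_\rho=\eta(d(x,\cdot)/\rho)$, where $\eta:\R\to[0,1]$ is smooth, $\eta\equiv1$ on $(-\infty,1/2]$, $\eta\equiv0$ on $[1,\infty)$, $|\eta'|\le 3$ (smoothed near the cut locus, harmless since the Sobolev inequality passes to Lipschitz functions by approximation), so $\phi_\rho\equiv1$ on $B_x(\rho/2,t)$, $\supp\phi_\rho\subset B_x(\rho,t)$ and $\|\nabla\phi_\rho\|\le 3/\rho$. Applying the $p=1$ Sobolev inequality to $\phi_\rho$, using $(\|\nabla\phi_\rho\|^2+\Psi\phi_\rho^2)^{1/2}\le\|\nabla\phi_\rho\|+\sqrt\Psi\,\phi_\rho$ and $\sqrt\Psi\le\sqrt{c_0}/\rho$ on $\supp\phi_\rho\subset B_x(r,t)$, I obtain
\begin{equation*}
V_x(\rho/2,t)^{\frac{n-1}{n}}\le\|\phi_\rho\|_{\frac{n}{n-1}}\le\frac{Ae^{Bt}}{n-1}\int_{B_x(\rho,t)}\Big(\frac{3}{\rho}+\frac{\sqrt{c_0}}{\rho}\Big)dV=\frac{D(t)}{\rho}\,V_x(\rho,t),
\end{equation*}
where $D(t)=\frac{(3+\sqrt{c_0})Ae^{Bt}}{n-1}$ depends only on $g(0)$, $n$, $t$, and $A$ is the constant of the Sobolev inequality.

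Now iterate: put $\rho_j=2^{-j}r$, $v_j=V_x(\rho_j,t)$, $\alpha=\frac{n-1}{n}\in(0,1)$. The last display reads $v_{j+1}^{\alpha}\le\frac{D(t)2^{j}}{r}v_j$, i.e. $v_j\ge r\,D(t)^{-1}2^{-j}v_{j+1}^{\alpha}$, and telescoping downward gives, for every $k\ge1$,
\begin{equation*}
v_0\ge v_k^{\alpha^{k}}\Big/\prod_{j=0}^{k-1}\Big(\frac{D(t)2^{j}}{r}\Big)^{\alpha^{j}}.
\end{equation*}
Because $g(t)$ is smooth, $v_k=\omega_n\rho_k^n(1+o(1))$, so $\log v_k=n\log r-kn\log2+O(1)$ and $v_k^{\alpha^k}=\exp(\alpha^k\log v_k)\to1$ since $\alpha^k\to0$ and $k\alpha^k\to0$. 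With $\sum_{j\ge0}\alpha^j=\frac{1}{1-\alpha}=n$ and $\sum_{j\ge0}j\alpha^j=\frac{\alpha}{(1-\alpha)^2}=n(n-1)$, the product converges to $(D(t)/r)^{n}2^{n(n-1)}$, and letting $k\to\infty$,
\begin{equation*}
V_x(r,t)=v_0\ge\frac{r^{n}}{D(t)^{n}2^{n(n-1)}}=\frac{(n-1)^n}{(3+\sqrt{c_0})^nA^n2^{n(n-1)}}\,e^{-Bnt}\,r^{n}=\kappa\,e^{-\tilde A t}\,r^{n},
\end{equation*}
with $\kappa$ and $\tilde A=Bn$ depending only on $g(0)$ and $n$ ($\tilde A$ being the $A$ of the statement).

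This is mostly bookkeeping, and I do not anticipate a genuine obstacle; the two places needing care are keeping the recursion constant $D(t)$ independent of the scale index $j$ — which works precisely because $B_x(\rho_j,t)\subset B_x(r,t)$, so the bound $\Psi\le c_0\rho_j^{-2}$ holds at every stage — and justifying the tail $v_k^{\alpha^k}\to1$, where the smoothness of the fixed-time metric is used. One could instead run the $p=2$ version of the inequality when $n\ge3$ and reach the same conclusion with different numerical constants; taking $p=1$ has the merit of also covering $n=2$.
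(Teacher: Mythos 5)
Your proof is correct. Note that the paper itself does not spell out an argument for this theorem at all: it simply invokes ``standard arguments via iteration'' from Zhang, Ye and Hsu, so there is nothing to match line by line. What you wrote is exactly such a standard argument, carried out cleanly and self-containedly: the $p=1$ case of the time-dependent Sobolev inequality applied to Lipschitz cutoffs gives the scale relation $V_x(\rho/2,t)^{(n-1)/n}\leq \frac{D(t)}{\rho}V_x(\rho,t)$ (your bounds $\Psi\leq c_0\rho^{-2}$ on $B_x(r,t)$, valid for all $\rho\leq r\leq 1$ under the hypothesis $r^2R\leq 1$, and the monotonicity of $\max_M R_-$ are used correctly), and the downward dyadic iteration anchored by the Euclidean small-scale asymptotics $V_x(\rho,t)\sim\omega_n\rho^n$ yields $V_x(r,t)\geq \kappa e^{-nBt}r^n$ with constants depending only on $g(0)$ and $n$; the series computations $\sum\alpha^j=n$, $\sum j\alpha^j=n(n-1)$ and the limit $v_k^{\alpha^k}\to 1$ all check out. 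The references the paper cites run the analogous iteration with $p=2$ (hence $n\geq 3$); your choice $p=1$ has the merit of covering $n=2$ in the same stroke and of producing completely explicit constants, so the argument is, if anything, slightly more general than what the citation trail provides.
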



\begin{thebibliography}{99}
\bibitem{Davies}Davies, E. B. {\sl Heat kernels and spectral theory.} Cambridge Tracts in Mathematics, 92. Cambridge University Press, Cambridge, 1989. x+197 pp.
\bibitem{Hsu}Hsu, Shu-Yu. {\sl Uniform Sobolev inequalities for manifolds evolving by Ricci flow.} arXiv:0708.0893v1.
\bibitem{KL}Kleiner,Bruce;Lott,John. {\sl Notes on Perelman's paper.} arXiv:0605667.
\bibitem{P}Perelman,G. {\sl The entropy formula for the Ricci flow and its geometric applications.} arXiv: 0211.159v1
\bibitem{S}Saloff-Coste, Laurent. {\sl Aspects of Sobolev-type inequalities.} London Mathematical Society Lecture Note Series, 289. Cambridge University Press, Cambridge, 2002. x+190 pp.
\bibitem{Ye}Ye, Rugang. {\sl The logarithmic Sobolev inequality along the Ricci flow.} arXiv:0707.2424v2.
\bibitem{Zhang1}Zhang, Qi S. {\sl A uniform Sobolev inequality under Ricci flow.} Int. Math. Res. Not. IMRN 2007, no. 17, Art. ID rnm056, 17 pp.
\bibitem{Zhang2}Zhang, Qi S. {\sl Erratum to: ``A uniform Sobolev inequality under Ricci flow'' [Int. Math. Res. Not. IMRN 2007, no. 17, Art. ID rnm056, 17 pp.; MR2354801].} Int. Math. Res. Not. IMRN 2007, no. 19, Art. ID rnm096, 4 pp.
\end{thebibliography}
\end{document}